\documentclass[12pt]{article}
\usepackage[utf8]{inputenc}	
\usepackage{amsmath,amsthm,amsfonts,amssymb,amscd}
\usepackage{amsmath}
\usepackage{multirow,booktabs}
\usepackage{graphicx}
\usepackage[table]{xcolor}
\usepackage{fullpage}
\usepackage{lastpage}
\usepackage{enumitem}
\usepackage{fancyhdr}
\usepackage{mathrsfs}
\usepackage{wrapfig}
\usepackage{setspace}
\usepackage{calc}
\usepackage{multicol}
\usepackage{cancel}
\usepackage[margin=3cm]{geometry}
\usepackage{amsmath}
\usepackage{amsrefs}

\setlength{\parindent}{0.0in}
\setlength{\parskip}{0.05in}
\usepackage{empheq}
\usepackage{framed}
\usepackage[most]{tcolorbox}
\usepackage{xcolor}
\colorlet{shadecolor}{orange!15}
\parindent 0in
\parskip 12pt
\geometry{margin=1in, headsep=0.25in}

\newcommand{\bibliofont}{\footnotesize}

\theoremstyle{definition}
\newtheorem{defn}{Definition}[section]
\newtheorem{remark}[defn]{Remark}

\newtheorem{question}{Question}

\theoremstyle{theorem}
\newtheorem{prop}[defn]{Proposition}
\newtheorem{cor}[defn]{Corollary}

\newtheorem*{thm*}{Theorem}
\newtheorem*{lem*}{Lemma}
\newtheorem{lem}[defn]{Lemma}

\begin{document}

\thispagestyle{empty}
\begin{center}\Large{Minimal genus four manifolds}\\
\large{Rom\'an Aranda}
\\
{January 2019}
\end{center}
\begin{abstract} 
In 2018, M. Chu and S. Tillmann gave a lower bound for the trisection genus of a closed 4-manifold in terms of the Euler characteristic of $M$ and the rank of its fundamental group. 
We show that given a group $G$, there exist a 4-manifold $M$ with fundamental group $G$ with trisection genus achieving Chu-Tillmann's lower bound. 
\end{abstract}

\setcounter{section}{0}
\section{Introduction} 
Let $M$ be a smooth, oriented, closed 4-manifold. D. Gay and R. Kirby showed in \cite{trisecting_four_mans} that $M$ can be written as the union of three 4-dimensional 1-handlebodies\footnote{A 1-handlebody of dimension four is a 4-ball with 1-handles attached along its boundary; i.e. $\natural_k S^1\times B^3$.} of genus $k_1$, $k_2$ and $k_3$, respectively; with pairwise intersections 3-dimensional connected handlebodies and triple intersection a connected surface of genus $g$. This is called a $(g;k_1,k_2,k_3)-$trisection of $M$. The trisection genus of a 4-manifold $M$, denoted by $g(M)$, is the smallest $g$ such that $M$ admits a $(g;*,*,*)-$trisection. 

Let $\tau$ be a trisection of $M$; i.e.  $M=X_1 \cup X_2\cup X_3$. Each 1-handlebody $X_i$ can be used to span the fundamental group and the first homology of $M$. In \cite{Tillmann_Chu}, M. Chu and S. Tillmann used this to give a lower bound to the trisection genus of a closed 4-manifold. They showed that if $M$ admits a $(g;k_1,k_2,k_3)-$trisection then 
\[ g \geq \chi(M) -2 + 3\text{rk}(\pi_1(M)). \] 

\begin{question} [From \cite{Tillmann_Chu}] \label{Q1}
Given any finitely presented group $G$, is there a smooth, oriented closed 4-manifold $M$ with $\pi_1(M)=G$ such that $g(M)=\chi(M) -2 + 3\text{rk}(G)$? 
\end{question}
In this note, we will give a positive answer of Question \ref{Q1}. 

Fix a finitely presented group $G$. We divide the answer in two steps as follows: \textit{(1)} We state an equivalent version of Question \ref{Q1} in terms of Kirby diagrams of closed 4-manifolds, which translates into a knot theory problem; and then \textit{(2)} we show the existence of a special type of links. 

Along this note, all 4-manifolds will be smooth, oriented and compact. For a link $L$ in a 3-manifold $Y$, $t_Y(L)$ will denote the tunnel number of $L$ in $Y$. We will omit the sub-index $Y$ if there is no confusion of the ambient manifold. 

\section{The proof}  
Let $M$ be a closed 4-manifold and $\tau$ be a $(g;k_1,k_2,k_3)-$trisection of $M$. In \cite{Tillmann_Chu}, M. Chu and S. Tillmann showed the inequality 
\begin{equation} \label{ineq} 
g \geq \chi(M) -2 + 3\text{rk}(\pi_1(M))
\end{equation}
Since $\chi(M)=2+g-(k_1+k_2+k_3)$ and $k_i \geq \text{rk}(\pi_1(M))$ for $i=1,2,3$, equality of Equation (\ref{ineq}) is equivalent to 
\begin{equation} \label{equality}
k_i = rk\left(\pi_1(M)\right) \quad \quad  i=1,2,3
\end{equation}  

The following lemma is an application of Lemmas 13 and 14 of \cite{trisecting_four_mans}. It shows that Question \ref{Q1} is equivalent to finding links with the correct homotopy class and ``small" tunnel number. 

\begin{lem}\label{equivalent_versions}
Let $G\neq 1$ be a finitely presented group of rank $n$. The following are equivalent: 
\begin{enumerate}[label = (\alph*)] 
\item  There is a smooth, oriented closed 4-manifold $M$ with $\pi_1(M)=G$ such that $$g(M)=\chi(M) -2 + 3\text{rk}(G)$$ 
\item There is a smooth, oriented closed 4-manifold $M$ with $\pi_1(M)=G$ having a handle decomposition with one 0-handle, $n$ 1-handles, $j$ 2-handles, $n$ 3-handles and one 4-handle such that the attaching region of the 2-handles is a link in $\#_n S^1\times S^2$ of tunnel number $n+j-1$.
\item There is a link $L$ in $\#_n S^1\times S^2$ of tunnel number at most $n+|L|-1$ such that the link $L$, thought of as a set of homotopy classes of loops based at a point, gives relations for a rank $n$ group presentation for $G$.
\end{enumerate} 
\end{lem}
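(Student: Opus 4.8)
The plan is to prove the two equivalences (a) $\Leftrightarrow$ (b) and (b) $\Leftrightarrow$ (c), using statement (b) as a bridge between the trisection picture and the knot-theoretic one. Set $n=\operatorname{rk}(G)$; the hypothesis $G\ne 1$ forces $n\ge 1$, which is needed for the numbers appearing in (b) and (c) to make sense (e.g. the bound $n+|L|-1$ on a tunnel number must be nonnegative). Recall from the paragraph preceding the lemma that, for a $(g;k_1,k_2,k_3)$-trisection of a closed $M$ with $\pi_1(M)=G$, equality in the Chu--Tillmann inequality is equivalent to $k_1=k_2=k_3=n$; since $\chi(M)=2+g-(k_1+k_2+k_3)$, in that case $g=n+j$ with $j:=g-n\ge 0$. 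Thus (a) is equivalent to: some closed $M$ with $\pi_1(M)=G$ carries a minimal-genus trisection, which is then automatically balanced of the form $(n+j;n,n,n)$ with $g(M)=n+j$.

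For (a) $\Leftrightarrow$ (b) I would invoke the dictionary of Lemmas 13 and 14 of \cite{trisecting_four_mans} between balanced trisections and handle decompositions. A $(g;n,n,n)$-trisection of $M$ yields a handle decomposition with one $0$-handle, $n$ $1$-handles, $g-n$ $2$-handles attached along a link $L\subset\#_n S^1\times S^2$ with $|L|=g-n$, $n$ $3$-handles and one $4$-handle, and conversely such a handle decomposition is assembled into a trisection by thickening $L$ with a tunnel system. The quantitative point I want from those lemmas is: this handle structure supports a genus-$(n+|L|)$ trisection (equivalently, a bound-achieving one) precisely when $t(L)\le n+|L|-1$ --- the lower bound $g\ge n+|L|$ being Chu--Tillmann with the Euler characteristic above, and the reverse being realized by using a minimal tunnel system of $L$ to build the three $1$-handlebodies. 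Setting $j=|L|$ and combining with the reformulation of (a) above gives (a) $\Leftrightarrow$ (b); the difference between ``tunnel number $=n+j-1$'' and ``$\le n+j-1$'' is harmless, since Chu--Tillmann pins $g(M)$ to $n+j$ as soon as $t(L)\le n+j-1$.

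For (b) $\Rightarrow$ (c) I would just read off $\pi_1$: after the $0$-handle and the $n$ $1$-handles the boundary is $\#_n S^1\times S^2$ with free rank-$n$ fundamental group, the $j$ $2$-handles along $L$ add the relations $[L_1],\dots,[L_j]$ (homotopy classes of the components of $L$), and the $n$ $3$-handles and the $4$-handle leave $\pi_1$ unchanged, so $G=\pi_1(M)=\langle x_1,\dots,x_n\mid [L_1],\dots,[L_j]\rangle$ is a presentation on $n=\operatorname{rk}(G)$ generators, with $t(L)=n+j-1\le n+|L|-1$; this is exactly (c). For (c) $\Rightarrow$ (b), start from $L\subset\#_n S^1\times S^2$ with $t(L)\le n+|L|-1$ realizing a rank-$n$ presentation of $G$, attach $|L|$ $2$-handles to $\natural_n S^1\times B^3$ along $L$ with framings chosen so that the new boundary is again $\#_n S^1\times S^2$, and cap off with $n$ $3$-handles and one $4$-handle; the resulting closed $M$ has the handle decomposition in (b), and the previous computation gives $\pi_1(M)=G$.

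The $\pi_1$ bookkeeping is routine. The step I expect to be the real obstacle is making the trisection/handle-decomposition dictionary quantitative, i.e. showing that the handle structure $(1,n,j,n,1)$ with $2$-handle link $L$ supports a genus-$(n+j)$ (hence minimal, hence bound-achieving) trisection exactly when $t(L)\le n+j-1$. The ``if'' direction needs the explicit Gay--Kirby construction of the three $1$-handlebodies from a minimal tunnel system of $L$, and in the (c) $\Rightarrow$ (b) passage one must additionally check that a framing on $L$ can be chosen so that the $2$-handle cobordism from $\#_n S^1\times S^2$ to itself closes up with exactly $n$ $3$-handles; the ``only if'' direction requires that the $2$-handle link produced from a genus-$(n+j)$ trisection always has tunnel number at most $n+j-1$, which one reads off by placing that link on the genus-$(n+j)$ trisection surface. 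Once this dictionary is pinned down, the equivalences follow from the elementary counting recorded above.
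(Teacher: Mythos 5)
Your handling of (a) $\Leftrightarrow$ (b) and of (b) $\Rightarrow$ (c) is essentially the paper's argument: the dictionary is Lemmas 13 and 14 of \cite{trisecting_four_mans} together with Proposition 4.2 of \cite{class_trisections}, and the exact value $t(L)=n+j-1$ in (b) is forced exactly as you indicate, by applying the Chu--Tillmann count to the (possibly unbalanced) trisection built from a minimal tunnel system (the paper phrases this as the contradiction $k_2=t(L)+1-|L|<n$).

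The genuine gap is in (c) $\Rightarrow$ (b). You propose to attach $|L|$ $2$-handles along $L$ ``with framings chosen so that the new boundary is again $\#_n S^1\times S^2$'' and then cap off with $n$ $3$-handles and a $4$-handle. No such choice of framings need exist: the new boundary is the result of Dehn surgery on $L$ inside $\#_n S^1\times S^2$, and requiring it to be $\#_n S^1\times S^2$ again is a strong, generically false condition (this is the territory of the generalized Property R conjecture, not something one arranges by picking framings); in particular the links produced later in Proposition \ref{existence_link} come with no control whatsoever on their surgeries. You correctly flag this closing-up step as ``the real obstacle,'' but you do not resolve it, and the implication fails without a resolution. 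The paper's fix is the doubling trick: build only the $2$-handlebody $N$ (one $0$-handle, $n$ $1$-handles, $|L|$ $2$-handles along $L$ with \emph{arbitrary} framing) and take $M=DN$. The double always closes up, and turning $N$ upside down exhibits the handle structure of (b) with attaching link $\widehat L=L\cup L'$, where $L'$ consists of $0$-framed meridians of the components of $L$. The cost is $|\widehat L|=2|L|$, so one must verify $t(\widehat L)\le n+|\widehat L|-1$; this follows from $t(\widehat L)\le t(L)+|L|$ (one extra tunnel per meridian), after which the same Chu--Tillmann count upgrades the inequality to the equality demanded in (b). Some such device is needed to make your (c) $\Rightarrow$ (b) go through.
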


\begin{proof} 
\textbf{(a) $\Rightarrow$ (b).} Let $M$ be a 4-manifold with $\pi_1(M)=G$ such that $g(M)=\chi(M) -2 + 3\text{rk}(G)$. Let $\tau$ be a $(g;k_1,k_2,k_3)-$trisection with $g=g(M)$. In particular, $\tau$ satisfies Equation (\ref{equality}); i.e., $\tau$ is a $\left(g;n,n,n\right)-$trisection of $M$. Lemma 13 of \cite{trisecting_four_mans} asserts that $M$ admits a handle decomposition with one 0-handle, $n$ 1-handles, $g-n$ 2-handles, $n$ 3-handles and one 4-handle such that the attaching region of the 2-handles is a framed link $L$ contained in the core of one of the handlebodies of a genus $g$ Heegaard splitting for $\#_n S^1\times S^2$. In particular, $t(L) \leq g-1$. The latter must be an equality since Proposition 4.2 of \cite{class_trisections} states that $M$ admits a new $(t(L)+1; n, t(L)+1-|L|,n)$-trisection and $t(L) < g-1$ will give us $t(L)+1-|L|<(g-1)+1-(g-n)=n$; contradicting the fact that $k_2\geq n$. Hence $t(L) = g-1$ and, (b) holds taking $j=g-n$. \\
\textbf{(b) $\Rightarrow$ (a).} 
Let $M$ be a 4-manifold satisfying (b) with the given handlebody decomposition. By taking the tubular neighborhood of the attaching region of the 2-handles and the tunnels, we obtain a Heegaard surface for $\#_n S^1\times S^2$ of genus $g=n-j$ satisfying the assumptions of Lemma 14 of \cite{trisecting_four_mans}. By the lemma, $M$ admits a $(g;n,n,n)$-trisection. In particular, Equation (\ref{equality}) holds and Equation (\ref{ineq}) becomes an equality, thus (a). \\ 
\textbf{(b) $\Rightarrow$ (c).} Take $L$ to be the attaching region of the 2-handles. \\ 
\textbf{(c) $\Rightarrow$ (b).} Let $L$ be such link and let $x \in \mathbb{Z}^{|L|}$ be any fixed vector. Consider $N$ to be the smooth 4-manifold with a handle decomposition given by one 0-handle, $n$ 1-handles and $|L|$ 2-handles attached along $L$ with framing $x$ with respect to the blackboard. Take $M$ to be the double of $N$. By assumption, $\pi_1(N) \cong G$ and so $\pi_1(M) \cong G$. Turning the handle decomposition of $N$ upside-down gives $M$ a handle decomposition with one 0-handle, $n$ 1-handles, $2|L|$ 2-handles, $n$ 3-handles and one 4-handle, where half of the 2-handles are attached along $L$ and for each component of $L$ there is a 0-framed unknot linked once along the given component and unlinked from the rest of the diagram. \\
Let $\widehat{L} = L \cup L'$ be the attaching region for the 2-handles of $M$. Notice that $$t(\widehat{L}) \leq t(L) + |L| = n + |\widehat{L}| -1.$$
By a version of Lemma 14 of \cite{trisecting_four_mans} for unbalanced trisections (see Proposition 4.2 of \cite{class_trisections}), $M$ admits a $(t(\widehat{L})+1; n, t(\widehat{L})+1 - |\widehat{L}|, n)-$trisection. Using Equation (\ref{ineq}) with $k_2 =  t(\widehat{L})+1 - |\widehat{L}|$ we obtain
\begin{align*} 
n\quad\leq &\quad t(\widehat{L})+1 - |\widehat{L}| \\ 
\quad\leq & \quad \left( t(L) + |L|\right) + 1 - 2|L|\\ 
\quad= & \quad t(L) + 1 -|L| \\
\quad\leq & \quad n.
\end{align*}
Thus, $t(\widehat{L})= n + |\widehat{L}| -1$ and $M$ is the desired 4-manifold. Hence (b).  
\end{proof}  

\begin{remark}
We have shown in Lemma \ref{equivalent_versions} that to answer Question \ref{Q1} in the positive, is enough to find a link $L$ in $\#_n S^1\times S^2$ with tunnel number at most $n+|L|-1$ such that the homotopy classes of the components of $L$, together, read a rank $n$ presentation for the given group $G$. The closed 4-manifold answering Question \ref{Q1} will be the double of a 4-manifold with a Kirby diagram with $n$ 1-handles and 2-handles attached along $L$ with any framing. 
\end{remark} 
The following proposition shows how to build links satisfying (c) in Lemma \ref{equivalent_versions}.
\begin{prop} \label{existence_link}
Let $G$ be a finitely presented group of rank $n$ with a presentation $\langle X|R \rangle$. There exists an $|R|$-component link in $\#_{|X|} S^1\times S^2$ with tunnel number at most $|X|+|R|-1$ such that the words read by the components of $L$ in $\pi_1(\#_{|X|} S^1\times S^2, \star)$ agree with the words in $R$. 
\end{prop}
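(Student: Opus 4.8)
\emph{Proof strategy.} Write $n=|X|$ and $\ell=|R|$, and list $R=\{r_1,\dots,r_\ell\}$ with each $r_i$ a word in $x_1,\dots,x_n$. The plan is to realize $L$ as a sub-collection of the circles of a spine of one side of a suitable genus-$(n+\ell)$ Heegaard splitting of $\#_n S^1\times S^2$, chosen so that each component automatically reads the required relator and so that the needed tunnels are exactly the arcs completing $L$ back to that spine. For the model I would stabilize the standard genus-$n$ splitting of $\#_n S^1\times S^2$ (the double of a $3$-dimensional genus-$n$ handlebody) $\ell$ times, getting a genus-$(n+\ell)$ splitting $\#_n S^1\times S^2=\mathcal H\cup_\Sigma\mathcal H'$. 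Choose a wedge-of-circles spine of $\mathcal H$ realizing a free basis $x_1,\dots,x_n,z_1,\dots,z_\ell$ of $\pi_1(\mathcal H)=F_{n+\ell}$, the $z_i$ being the generators introduced by the stabilizations; then $n$ of the meridians of $\mathcal H'$ are nullhomotopic in $\mathcal H$ and the remaining $\ell$ read $z_1,\dots,z_\ell$, so $\pi_1(\#_n S^1\times S^2)=F_{n+\ell}/\langle\!\langle z_1,\dots,z_\ell\rangle\!\rangle=\langle x_1,\dots,x_n\rangle=F_n$, and we place the basepoint $\star$ at the wedge point $p$.

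Next I would reroute the spine so the relators appear. The tuple $(x_1,\dots,x_n,\,r_1 z_1,\dots,r_\ell z_\ell)$ (where $r_i$ now denotes $r_i(x_1,\dots,x_n)\in F_{n+\ell}$) is again a free basis of $F_{n+\ell}$, being the image of the previous one under the automorphism left-multiplying each $z_i$ by $r_i$. Since every free basis of the fundamental group of a handlebody is carried by a wedge-of-circles spine — equivalently, elementary Nielsen transformations are induced by handle slides of the spine — I may replace the spine of $\mathcal H$ by one of the form $\Gamma_0=X_1\vee\dots\vee X_n\vee Z_1\vee\dots\vee Z_\ell$ based at $p$, with $[X_j]=x_j$ and $[Z_i]=r_i z_i$ in $\pi_1(\mathcal H)$. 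Put $L=Z_1\sqcup\dots\sqcup Z_\ell$, the circles $Z_i$ pushed off $p$ so as to be pairwise disjoint (possible by general position, as $1+1<3$). Then $L$ is an $\ell$-component link in $\#_n S^1\times S^2$, and, based at $\star=p$, its $i$-th component represents in $\pi_1(\#_n S^1\times S^2,\star)=F_n$ the element $r_i z_i$, which equals $r_i$ because $z_i$ dies in the quotient; thus the components of $L$ read the words of $R$.

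It remains to bound $t(L)$. Let $\tau$ be a system of $n+\ell-1$ pairwise disjoint arcs, meeting $L$ only at their endpoints, that completes $L$ to $\Gamma_0$ up to ambient isotopy in $\mathcal H$; concretely, $\ell-1$ arcs joining $Z_1,\dots,Z_\ell$ into a connected subgraph together with $n$ arcs, each with both endpoints on $Z_1$ and routed alongside the corresponding $X_j$, which recreate the circles $X_1,\dots,X_n$ up to isotopy. (The number $n+\ell-1$ is forced: a connected graph built from $\ell$ disjoint circles and $m$ arcs has first Betti number $1+m$, which must match $b_1(\Gamma_0)=n+\ell$.) Sliding the pieces back to $p$ along these arcs carries $L\cup\tau$, up to ambient isotopy in $\mathcal H$, onto the spine $\Gamma_0$; hence $N(L\cup\tau)$ is isotopic to $\mathcal H$ and $\#_n S^1\times S^2\setminus N(L\cup\tau)$ is isotopic to the handlebody $\mathcal H'$. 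Therefore $\tau$ is a tunnel system for $L$, and $t(L)\le|\tau|=n+\ell-1=|X|+|R|-1$, which proves the proposition.

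The crux is the rerouting step: one must check that $z_i$ can be replaced by $r_i z_i$ by handle slides over the circles carrying $x_1,\dots,x_n$ while the whole collection stays a spine of $\mathcal H$ — this is precisely where the freedom of the homotopy-class (rather than isotopy) formulation is used, the relators being absorbed into the spine — and that the bookkeeping of basepoints together with the claim $N(L\cup\tau)\cong\mathcal H$ survives an explicit diagram. Everything else is general position and Heegaard-stabilization bookkeeping.
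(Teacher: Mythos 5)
Your proposal is correct and is essentially the same construction as the paper's: the paper starts with an unknotted wedge of $n$ generating loops together with $|R|$ small trivial circles near the basepoint (which is exactly a spine of one handlebody of the $|R|$-times-stabilized genus-$n$ Heegaard splitting of $\#_n S^1\times S^2$), slides an end of each trivial circle along the generating loops so that it reads the corresponding relator (your Nielsen-transformation/handle-slide step, $z_i\mapsto r_i z_i$), and then takes the slid circles as $L$ and the remaining edges of the graph as the $n+|R|-1$ tunnels. Your write-up just formalizes the paper's pictorial argument; no change of method or new gap.
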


\begin{cor} \label{answer_Q1}
Given a finitely presented group $G$, there is a closed 4-manifold with fundamental group isomorphic to $G$ satisfying $g(M)=\chi(M) -2 + 3\text{rk}(G)$.
\end{cor}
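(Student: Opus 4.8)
The plan is to read Corollary~\ref{answer_Q1} off the two results that precede it: Lemma~\ref{equivalent_versions} reduces the existence of an extremal $4$-manifold to the link-theoretic condition (c), and Proposition~\ref{existence_link} produces a link satisfying it. First dispose of the trivial group: if $G=1$ then $\mathrm{rk}(G)=0$ and $\chi(S^{4})-2+3\cdot 0 = 0 = g(S^{4})$, since $S^{4}$ carries the standard genus-$0$ trisection, so $M=S^{4}$ works. From now on assume $G\neq 1$ and set $n=\mathrm{rk}(G)\geq 1$.

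Since $G$ is finitely presented, it admits a finite presentation $\langle X\mid R\rangle$ with $|X|=n$: starting from an arbitrary finite presentation, adjoin a minimal generating set together with the relations expressing the old generators through the new ones, and then delete the old generators by Tietze transformations, which leaves the relator set finite. Apply Proposition~\ref{existence_link} to this presentation. It yields an $|R|$-component link $L\subset \#_{n}S^{1}\times S^{2}$ with $t(L)\leq |X|+|R|-1 = n+|L|-1$ whose components, read as based loops in $\pi_{1}(\#_{n}S^{1}\times S^{2},\star)\cong F_{n}$, spell the words of $R$. Thus $L$ furnishes the relators of the rank-$n$ presentation $\langle X\mid R\rangle$ of $G$, that is, $L$ satisfies hypothesis (c) of Lemma~\ref{equivalent_versions}.

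Now the implication (c) $\Rightarrow$ (a) of Lemma~\ref{equivalent_versions} gives a smooth, oriented, closed $4$-manifold $M$ with $\pi_{1}(M)\cong G$ and $g(M)=\chi(M)-2+3\,\mathrm{rk}(G)$; by the Remark this $M$ may be taken to be the double of the $4$-manifold built from $n$ $1$-handles with $2$-handles attached along $L$ with any framing. This is exactly the Corollary.

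The deduction itself has no real obstacle; all the content sits in Proposition~\ref{existence_link}, which is assumed above, and within the argument here the only points needing care are the reduction to a presentation on exactly $\mathrm{rk}(G)$ generators and the degenerate group $G=1$, both dealt with. For orientation, I would expect Proposition~\ref{existence_link} to be proved by an explicit construction: represent each relator $r_{i}$ by a loop threading the $1$-handles of $\#_{n}S^{1}\times S^{2}$ in the pattern dictated by the word $r_{i}$, let $L$ be the resulting $|R|$-component link, and exhibit $|X|+|R|-1$ tunnels --- morally $|X|$ arcs handling the $S^{1}\times S^{2}$ summands together with a spine of $L$, plus $|R|-1$ arcs joining up the components --- whose regular neighborhood, unioned with $L$, has handlebody complement. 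The hard part there will be choosing the loops and their relative positions so that so few tunnels genuinely produce a handlebody, most likely by an induction on $|R|$ or on total relator length.
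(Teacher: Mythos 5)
Your proof is correct and is essentially the argument the paper intends: the corollary follows by combining Proposition~\ref{existence_link} (applied to a presentation of $G$ on exactly $\mathrm{rk}(G)$ generators) with the implication (c)~$\Rightarrow$~(a) of Lemma~\ref{equivalent_versions}, and your sketch of how Proposition~\ref{existence_link} should be proved matches the paper's construction. Your explicit handling of the case $G=1$ and of the reduction to a rank-$n$ presentation are small but worthwhile additions that the paper leaves implicit.
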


\begin{proof} [Proof of Proposition \ref{existence_link}]
Take an unknotted graph in $\#_n S^1 \times S^2$ made by $n$ loops, denoted by $\Gamma_0$, generating $\pi_1(\#_n S^1\times S^2, \star)$ and $|R|$ unknotted loops $c_1, \dots, c_{|R|}$ around a neighborhood of $\star$; see Figure \ref{unknotted}.
For each relation $r_i \in R$, take the $i$-th unknotted circle $c_i$ and slide one of its ends along the loops on $\Gamma_0$ so that $c_i$ now reads the word $r_i$ as an element of $\pi_1(\#_n S^1\times S^2, \star)$. Each component of $L$ will be a circle $c_i$, and the rest of the graph can be homotoped to be a system of tunnels for $L$. \\
We have described how to build a link $L$ in $\#_n S^1\times S^2$ with $t(L) \leq n + |L| -1$. 
\end{proof}

\begin{figure}[h]
\centering
\includegraphics[scale=.1]{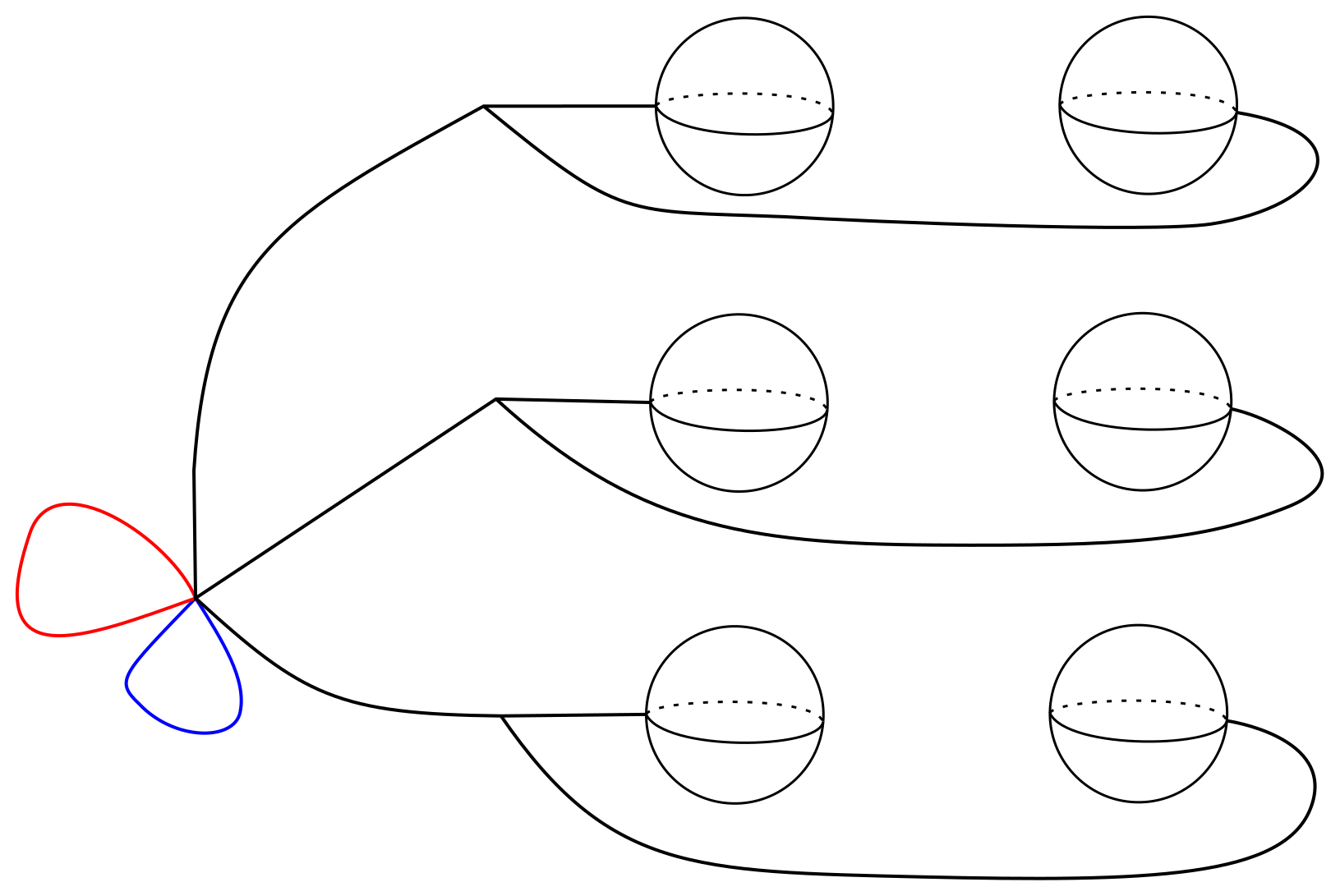}
\caption{Unknotted graph in $\#_3 S^1\times S^2$. The red and blue loops are $c_1$ and $c_2$, respectively.}
\label{unknotted}
\end{figure}


The following figures ilustrate the construction of $L$ for the group $G=\langle x,y,z | x^3y^{-2},  [y,z] \rangle$. 
\begin{figure}[h]
\centering
\includegraphics[scale=.22]{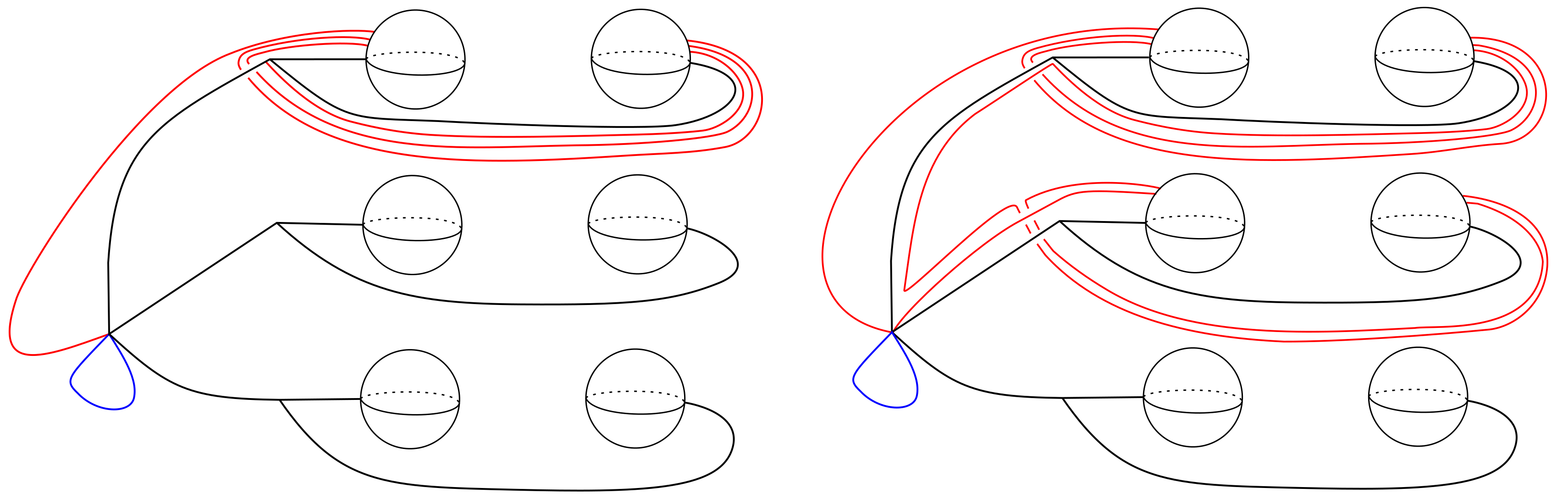}
\caption{Sliding the end of $c_1$ (red) to produce the word $x^3y^{-2}$.}
\label{Step_1}
\end{figure}

\begin{figure}[h]
\centering
\includegraphics[scale=.11]{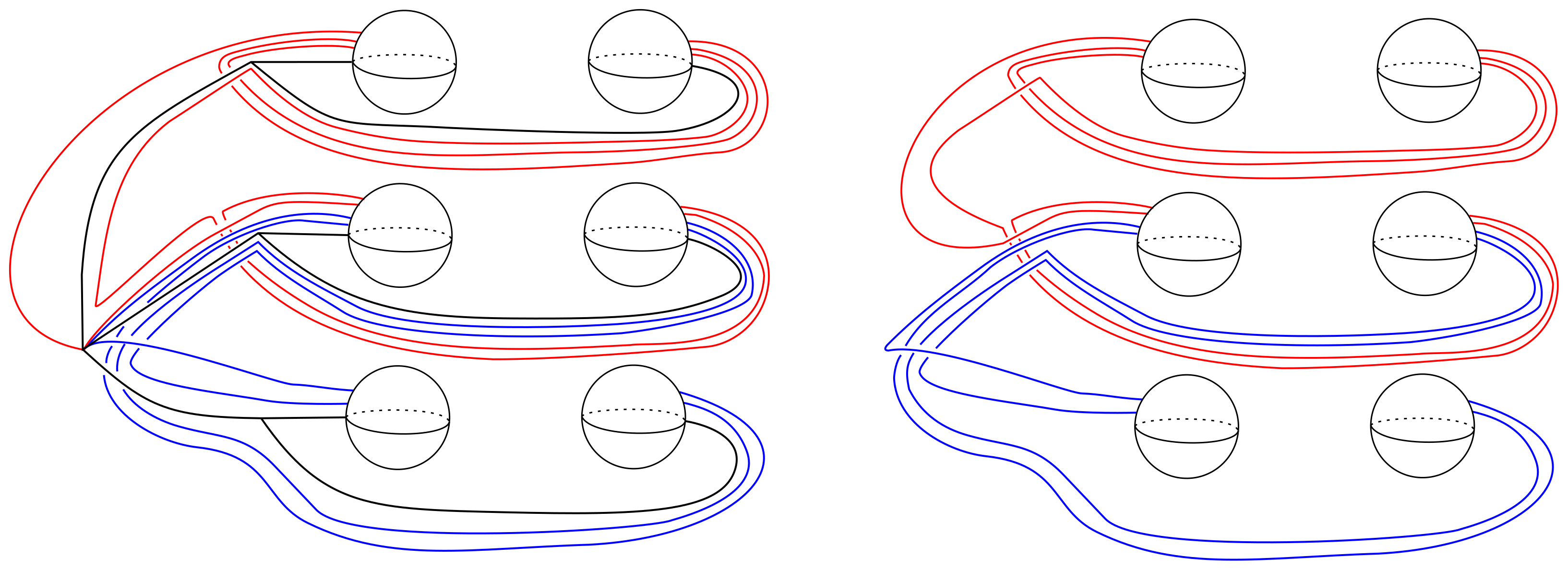}
\caption{Left: The graph after the homotopies, red loop reads $x^3y^{-2}$ and blue loop reads $[y,z]$. Right: The desired link $L$ for $G=\langle x,y,z | x^3y^{-2},  [y,z] \rangle$.}
\label{Step_2}
\end{figure}

\begin{remark} 
Note that the knot type of the link $L$ doesn't change the final 4-manifold built in Corollary \ref{answer_Q1}. This happens since, when taking double of the 4-manifold, the 0-framed unknots around each component of $L$ allow us to slide the handlebody 2-handles and so to change the crossings of the link. The same reason explains why we only care about the framing of the components of $L$ modulo $2$. In any case, one can consider connected sums of our manifolds with copies of $S^2\times S^2$ and $\pm CP^2$ to build infinitely many 4-manifolds solving Question \ref{Q1}.  


One of the two 4-manifolds appearing when running the construction with finite cyclic groups $\langle x|x^m = 0\rangle$ is the spun lens space $\mathcal{S}_m$ trisected by J. Meier in \cite{spun_trisections}. One can check this by comparing the Kirby diagram we construct with the one drawn by J. Montesinos in \cite{Heegaard_splittings_of_4_mans}.
\end{remark}

\begin{flushright}
\texttt{email: jose-arandacuevas@uiowa.edu}
\end{flushright}

\begin{thebibliography}{3} 
\bibliofont
\bibitem{Tillmann_Chu} Chu, M., \& Tillmann, S. \textit{Reflections on trisection genus.} arXiv preprint arXiv:1809.04801, 2018.
\bibitem{trisecting_four_mans} Gay, David, and Robion Kirby. \textit{Trisecting 4–manifolds.} Geometry \& Topology 20.6 (2016): 3097-3132.
\bibitem{spun_trisections} Meier, Jeffrey. \textit{Trisections and spun 4-manifolds.} arXiv preprint arXiv:1708.01214, 2017.
\bibitem{class_trisections}Meier, Jeffrey, Trent Schirmer, and Alexander Zupan. \textit{Classification of trisections and the generalized property R conjecture.} Proceedings of the American Mathematical Society 144.11 (2016): 4983-4997.
\bibitem{Heegaard_splittings_of_4_mans} Montesinos, José María. \textit{``Heegaard diagrams for closed 4-manifolds."} Geometric Topology. 1979. 219-237.

\end{thebibliography}
\end{document}